\theoremstyle{plain}
\newtheorem{theorem}{Theorem}
\newtheorem{corollary}{Corollary}
\newtheorem{proposition}{Proposition}
\theoremstyle{definition}
\theoremstyle{remark}
\numberwithin{equation}{section}
\newdimen\plusheight
\def\+{\;\lower\plusheight\hbox{$+$}\;}
\newdimen\minusheight
\def\-{\;\lower\minusheight\hbox{$-$}\;}
\newdimen\cdotsheight
\def\cds{\lower\cdotsheight\hbox{$\cdots$}}
\begin{document}
\title[Powers of a matrix and combinatorial identities ]
 { Powers of a matrix and combinatorial identities}
 \author{J. Mc Laughlin}
\address{Mathematics Department\\
 Trinity College\\
300 Summit Street, Hartford, CT 06106-3100}
\email{james.mclaughlin@trincoll.edu}
\author{ B. Sury}
\address{Stat-Math Unit,\\
Indian Statistical Institute,\\
8th Mile Mysore Road,
Bangalore 560 059,
India.}
\email{sury@isibang.ac.in}

\date{May 4th, 2004}

\keywords{Polynomial and Combinatorial Identities}
\subjclass{05A19, 15A99}

\begin{abstract}

In this article we obtain a general polynomial identity in $k$ variables, where
$k\geq 2$ is an arbitrary positive integer.
We use this identity to give a closed-form expression for the entries of
the powers of a
$k \times k$ matrix.
Finally, we use these results to derive various combinatorial identities.
\end{abstract}

\maketitle

\section{Introduction}

In \cite{S93}, the second author had observed that the
following `curious' polynomial identity holds:
\[
\sum (-1)^i \binom{n-i }{ i} (x+y)^{n-2i}(xy)^i
= x^n + x^{n-1}y + \cdots + xy^{n-1} + y^n.
\]
The proof was simply observing that both sides satisfied the same recursion.
He had also observed (but not published the result) that this
recursion defines in a closed form the entries of the powers of a
$2 \times 2$ matrix in terms of its trace and determinant and the entries of
the original matrix. The first author had independently discovered this fact
and derived several combinatorial identities as consequences
\cite{McL04}.

In this article, for a general $k$, we obtain a polynomial identity
and show how it gives a closed-form expression for the entries of the powers of a
$k \times k$ matrix. From these, we derive some combinatorial identities as
consequences.

\section{Main Results}
{\it Throughout the paper, let $K$ be any fixed field of characteristic zero.
We also fix a positive integer $k$.} The main results are the following
two theorems:

\begin{theorem}\label{t1}
Let $x_1, \cdots, x_k$ be independent variables and let $s_1, \cdots, s_k$
denote the various symmetric polynomials in the $x_i$'s of degrees
$1,2 \cdots, k$ respectively.
Then, in the polynomial ring $K[x_1, \cdots, x_k]$, for each positive integer $n$,
one has the identity
\begin{multline*}
\sum_{r_1 + \cdots + r_k = n} x_1^{r_1}x_2^{r_2} \cdots x_k^{r_k}
=\\
\sum_{2i_2+3i_3+ \cdots +ki_k \leq n}
%\negthickspace \negthickspace \negthickspace \negthickspace \negthickspace
%\negthickspace \negthickspace \negthickspace
 c(i_2,\cdots,i_k,n)
s_1^{n-2i_2-3i_3- \cdots - ki_k}\\
\times
(-s_2)^{i_2}s_3^{i_3} \cdots ((-1)^{k-1}s_k)^{i_k},
\end{multline*}
 where
\[
c(i_2,\cdots,i_k,n) =
\frac{ (n-i_2-2i_3- \cdots -(k-1)i_k )!}{
i_2! \cdots i_k! (n-2i_2-3i_3- \cdots -(ki_k )!}.
\]
\end{theorem}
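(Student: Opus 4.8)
The plan is to recognize the left-hand side as the complete homogeneous symmetric polynomial $h_n$ in $x_1,\dots,x_k$ and to exploit its generating function. Working in the formal power series ring $K[x_1,\dots,x_k][[t]]$, one has the classical factorization
\[
\sum_{n\ge 0} h_n\, t^n \;=\; \prod_{i=1}^k \frac{1}{1-x_i t},
\]
since expanding each factor as a geometric series and multiplying out collects precisely the monomials $x_1^{r_1}\cdots x_k^{r_k}$ of total degree $n$ in the coefficient of $t^n$, which is exactly the summand on the left of the statement. Because $\prod_{i=1}^k(1-x_i t)=\sum_{j=0}^k(-1)^j s_j t^j$ by the definition of the elementary symmetric polynomials, the denominator equals $1-u$, where $u:=s_1 t - s_2 t^2 + \cdots + (-1)^{k-1}s_k t^k$.

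First I would invert via the geometric series $\frac{1}{1-u}=\sum_{m\ge 0}u^m$; this is a legitimate identity of formal power series because $u$ has no constant term, so only finitely many $m$ contribute to any fixed power of $t$. Next I would expand each $u^m$ by the multinomial theorem,
\[
u^m = \sum_{j_1+\cdots+j_k=m}\binom{m}{j_1,\dots,j_k}\prod_{l=1}^k\bigl((-1)^{l-1}s_l\bigr)^{j_l}\,t^{\,l j_l},
\]
and read off the coefficient of $t^n$, which forces $\sum_l l\,j_l=n$ while $m$ ranges over all values $\sum_l j_l$.

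The bookkeeping step is then to reparametrize by setting $i_l:=j_l$ for $2\le l\le k$ and eliminating $j_1$ and $m$ through $j_1=n-2i_2-3i_3-\cdots-ki_k$ and $m=j_1+i_2+\cdots+i_k=n-i_2-2i_3-\cdots-(k-1)i_k$. Under this substitution the multinomial coefficient $\binom{m}{j_1,\dots,j_k}$ becomes exactly $c(i_2,\dots,i_k,n)$, the sign-bearing product collapses to $s_1^{\,n-2i_2-\cdots-ki_k}(-s_2)^{i_2}s_3^{i_3}\cdots((-1)^{k-1}s_k)^{i_k}$, and the constraint $\sum_l l j_l=n$ with $j_1\ge 0$ turns into $2i_2+3i_3+\cdots+ki_k\le n$ — precisely the summation range in the statement. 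Matching the two expressions coefficient-by-coefficient in $t$ completes the proof.

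I expect the only genuine obstacle to be this last, essentially mechanical identification: one must check carefully that $j_1\ge 0$ is equivalent to the stated inequality and that the factorial manipulation turning $\binom{m}{j_1,\dots,j_k}$ into $c$ is carried out with the correct arguments. A slightly different route, closer to the recursion hinted at in the two-variable case, is to first prove the Newton-type relation $h_n=s_1 h_{n-1}-s_2 h_{n-2}+\cdots+(-1)^{k-1}s_k h_{n-k}$ (immediate from $\bigl(\sum_n h_n t^n\bigr)\bigl(\sum_j(-1)^j s_j t^j\bigr)=1$) and then verify by induction on $n$ that the right-hand side obeys the same recursion; there the obstacle shifts to establishing a Pascal-type identity among the coefficients $c$, which is why I prefer the generating-function bookkeeping above.
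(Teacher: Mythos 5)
Your proposal is correct, and it takes a genuinely different route from the paper. The paper's entire proof is the one-line observation that both sides satisfy the linear recursion $a(n)=s_1a(n-1)-s_2a(n-2)+\cdots+(-1)^{k-1}s_ka(n-k)$, declared ``straightforward to verify''; the work hidden in that remark is precisely the Pascal-type identity among the coefficients $c(i_2,\dots,i_k,n)$ that you flag as the obstacle in your alternative route, together with a check of initial values for $n<k$. You instead exhibit both sides as the coefficient of $t^n$ in $\prod_{i=1}^k(1-x_it)^{-1}$: the left side by expanding each geometric factor, the right side by writing the product as $(1-u)^{-1}=\sum_m u^m$ with $u=s_1t-s_2t^2+\cdots+(-1)^{k-1}s_kt^k$ and applying the multinomial theorem. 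Your bookkeeping checks out: with $j_1=n-2i_2-\cdots-ki_k$ and $m=n-i_2-2i_3-\cdots-(k-1)i_k$, the multinomial coefficient $\binom{m}{j_1,i_2,\dots,i_k}$ is exactly $c(i_2,\dots,i_k,n)$, the condition $j_1\ge 0$ is exactly the stated summation range, and the sign pattern matches. What your route buys is a self-contained, induction-free derivation that also explains the provenance of the coefficients $c$ as multinomial coefficients; what the paper's route buys is brevity and a recursion that it immediately reuses to prove Theorem 2. (Your closing remark correctly identifies that the two routes are linked: the recursion is equivalent to the generating-function identity $\bigl(\sum_n h_nt^n\bigr)\bigl(\sum_j(-1)^js_jt^j\bigr)=1$, so your argument in effect supplies the verification the paper omits.)
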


\begin{theorem}\label{t2}
Suppose $A \in M_k(K)$ and let
\[
T^k-s_1T^{k-1}+s_2T^{k-2}+ \cdots +(-1)^ks_k\,I
\]
denote its characteristic polynomial.
Then, for all $n \geq k$, one has
\[
A^n = b_{k-1}A^{k-1}+b_{k-2}A^{k-2}+ \cdots + b_0 \,I,
\]
where
{\allowdisplaybreaks
\begin{align*}
b_{k-1}& = a(n-k+1),\\
b_{k-2} &= a(n-k+2)-s_1a(n-k+1),\\
& \phantom{a}\vdots \\
b_1& = a(n-1)-s_1a(n-2)+ \cdots + (-1)^{k-2}s_{k-2}a(n-k+1),\\
b_0 &= a(n)-s_1a(n-1)+ \cdots + (-1)^{k-1}s_{k-1}a(n-k+1)\\
& = (-1)^{k-1}s_ka(n-k).
\end{align*}
}
and
\[
a(n) = c(i_2,\cdots,i_k,n)
s_1^{n-i_2-2i_3- \cdots -(k-1)i_k}
(-s_2)^{i_2}s_3^{i_3} \cdots ((-1)^{k-1}s_k)^{i_k},
\]
with
\[
c(i_2,\cdots,i_k,n) =
\frac{ (n-i_2-2i_3- \cdots -(k-1)i_k )!}{
i_2! \cdots i_k! (n-2i_2-3i_3- \cdots -(ki_k )!}.
\]
as in Theorem 1.
\end{theorem}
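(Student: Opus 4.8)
The plan is to recast the claimed expansion as a single congruence modulo the characteristic polynomial and to reduce everything to one recurrence satisfied by $a(n)$. First I would record that, by Theorem~\ref{t1}, the quantity $a(n)$ is nothing but the complete homogeneous symmetric polynomial $h_n=\sum_{r_1+\cdots+r_k=n}x_1^{r_1}\cdots x_k^{r_k}$ in the roots $x_1,\dots,x_k$ of the characteristic polynomial, so that $a(0)=1$, $a(n)=0$ for $n<0$, and, by the standard identity $H(t)E(-t)=1$ between the generating series of the complete and elementary symmetric functions, one has
\[
a(n)=s_1a(n-1)-s_2a(n-2)+\cdots+(-1)^{k-1}s_k\,a(n-k)\qquad(n\ge 1).
\]
All subsequent manipulations are polynomial identities in $s_1,\dots,s_k$, so it is harmless to work over a splitting field of the characteristic polynomial. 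I would also note at the outset that this very recurrence is exactly the assertion that the two displayed expressions for $b_0$ agree.

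Next I would write $A^n=R_n(A)$, where $R_n(T)=T^n \bmod \chi(T)$ is the remainder of degree $<k$; this is legitimate because $\chi(A)=0$ by the Cayley--Hamilton theorem, so $A^n=q(A)\chi(A)+R_n(A)=R_n(A)$. The goal then becomes the purely polynomial statement that $R_n(T)=S_n(T)$, where I set
\[
S_n(T)=\sum_{m=0}^{k-1} a(n-m)\,\phi_m(T),\qquad \phi_m(T)=\sum_{p=0}^{m}(-1)^p s_p\,T^{m-p}.
\]
The $\phi_m$ are the truncations of $\chi$, with $\phi_k=\chi$. A short reindexing of the double sum defining $S_n$ shows that the coefficient of $T^{\ell}$ in $S_n$ is precisely the $b_\ell$ written in the theorem; hence proving $R_n=S_n$ and then substituting $T\to A$ and collecting powers of $A$ will yield the stated formula.

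The heart of the argument is to prove $R_n=S_n$ by induction on $n$. Since both sides have degree $<k$, it suffices to show $T^n\equiv S_n(T)\pmod{\chi}$. The base case is immediate: $S_0(T)=a(0)\phi_0(T)=1=T^0$, because $a(0)=1$ and $a(-m)=0$ for $m\ge1$. For the inductive step I would verify the one-step relation $S_n(T)\equiv T\,S_{n-1}(T)\pmod{\chi}$. Using $\phi_{m+1}=T\phi_m+(-1)^{m+1}s_{m+1}$ for $m\le k-2$, together with the single reduction $T\phi_{k-1}\equiv(-1)^{k-1}s_k\pmod{\chi}$ coming from $\chi\equiv0$, the expression $T\,S_{n-1}$ collapses after reindexing to $S_n-\bigl(a(n)-\sum_{p=1}^{k}(-1)^{p-1}s_p\,a(n-p)\bigr)$; the parenthesized quantity vanishes by the recurrence for $a$. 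Thus $T\,S_{n-1}\equiv S_n$, completing the induction.

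I expect the only delicate point to be the bookkeeping in the inductive step: keeping the index ranges straight when applying $\phi_{m+1}=T\phi_m+(-1)^{m+1}s_{m+1}$ across $m=0,\dots,k-2$, isolating the boundary term $m=k-1$, and matching the leftover against the $a$-recurrence so that everything cancels. Once that single computation is in place, the identification of the coefficient of $T^\ell$ in $S_n$ with $b_\ell$ is a routine reindexing, and the passage from the polynomial congruence to the matrix identity is immediate from Cayley--Hamilton.
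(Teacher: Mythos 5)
Your proposal is correct and follows the same route as the paper, which proves Theorem~\ref{t2} simply by invoking the recurrence $a(n)=s_1a(n-1)-s_2a(n-2)+\cdots+(-1)^{k-1}s_ka(n-k)$ from Theorem~\ref{t1} together with Cayley--Hamilton and induction on $n$. You have merely supplied the bookkeeping (the truncated polynomials $\phi_m$ and the congruence $S_n\equiv TS_{n-1}\pmod{\chi}$) that the paper leaves to the reader, and your details check out.
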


%\noindent
\emph{Proof of Theorems \ref{t1} and \ref{t2}.}
In Theorem 1,
if $a(n)$ denotes either side, it is straightforward to verify that
{\allowdisplaybreaks
\[
a(n) = s_1a(n-1)-s_2a(n-2)+ \cdots +(-1)^{k-1}s_ka(n-k).
\]
}
Theorem 2 is a consequence of Theorem 1 on using induction on $n$.
\begin{flushright}
$\Box$
\end{flushright}
The special cases $k=2$ and $k=3$ are worth noting for it is easier to derive
various combinatorial identities from them.

\begin{corollary}\label{c1}
(i) Let $A\in M_{3}(K)$
and let $X^3 = tX^2 - sX + d$ denote the characteristic polynomial of $A$.
Then, for all $n \geq 3$,
\begin{equation}\label{eq1}
A^n = a_{n-1}A + a_{n-2} Adj(A) + (a_n - ta_{n-1})\,I,
\end{equation}
where
\[
a_n =
\sum_{2i+3j \leq n} (-1)^i \binom{i+j}{ j} \binom{n-i-2j }{i+j} t^{n-2i-3j}s^id^j
\]
for $n>0$ and $a_0=1$.

(ii) Let $B \in M_2(K)$ and let $X^{2}=t\,X-d$ denote the characteristic polynomial of $B$.
Then, for all $n \geq 2$,
\[
B^n = b_n I + b_{n-1} Adj(B)
\]
for all $n \geq 2$, where
\[
b_n = \sum \binom{n-i }{ i} (-1)^i t^{n-2i} d^i.
\]
\end{corollary}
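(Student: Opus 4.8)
The plan is to obtain both parts as specializations of Theorem~\ref{t2}, the only extra ingredient being the Cayley--Hamilton relation that expresses the adjugate of the matrix in terms of its powers. I would treat part (i) first, since part (ii) is the analogous but simpler $k=2$ computation.

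For part (i) I would set $k=3$ and match notation by writing $s_1=t$, $s_2=s$, $s_3=d$, so that the characteristic polynomial $X^3-tX^2+sX-d$ is exactly the one in Theorem~\ref{t2}. That theorem then gives $A^n=b_2A^2+b_1A+b_0I$ with $b_2=a(n-2)$, $b_1=a(n-1)-t\,a(n-2)$, and $b_0=a(n)-t\,a(n-1)+s\,a(n-2)$. The key step is to eliminate $A^2$: from Cayley--Hamilton, $A^3-tA^2+sA-dI=0$, so $A(A^2-tA+sI)=dI=\det(A)\,I$, whence $\mathrm{Adj}(A)=A^2-tA+sI$ and therefore $A^2=\mathrm{Adj}(A)+tA-sI$. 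Substituting and regrouping, the coefficient of $\mathrm{Adj}(A)$ is $b_2=a(n-2)$, the coefficient of $A$ is $t b_2 + b_1 = a(n-1)$, and the coefficient of $I$ is $b_0-s b_2=a(n)-t\,a(n-1)$, which is precisely \eqref{eq1} once I identify $a_n$ with $a(n)$.

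It then remains to check that $a(n)$, i.e.\ the right-hand side of Theorem~\ref{t1} specialized to $k=3$, agrees with the closed form stated in the corollary. Writing $i=i_2$ and $j=i_3$, this reduces to the purely combinatorial identity
\[
\frac{(n-i-2j)!}{i!\,j!\,(n-2i-3j)!}=\binom{i+j}{j}\binom{n-i-2j}{i+j},
\]
which follows by expanding both binomial coefficients; the summation constraint $2i+3j\le n$ guarantees that all the factorials involved are of nonnegative integers, so no degenerate cases arise.

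For part (ii) I would run the same argument with $k=2$ and $s_1=t$, $s_2=d$. Here Theorem~\ref{t2} gives $B^n=b_1B+b_0I$ with $b_1=a(n-1)$ and $b_0=a(n)-t\,a(n-1)$, and the relevant adjugate identity is $\mathrm{Adj}(B)=tI-B$, again read off from $B^2-tB+dI=0$. Rewriting $B$ through this relation and collecting terms expresses $B^n$ in terms of $I$ and $\mathrm{Adj}(B)$ with coefficients built from $a(n)$ and $a(n-1)$; identifying $b_n=a(n)$ and using $c(i,n)=\binom{n-i}{i}$ then yields the stated formula. The main thing to get right throughout is the bookkeeping in the regrouping step---verifying that the $b$-coefficients telescope into the clean single-term coefficients $a(n-2)$, $a(n-1)$, and $a(n)-t\,a(n-1)$---but this is a direct calculation rather than a genuine obstacle.
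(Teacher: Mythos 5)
Your approach is exactly the intended one: the paper offers no separate proof of this corollary, presenting it as the specialization of Theorem~\ref{t2} to $k=3$ and $k=2$, with the adjugate brought in via Cayley--Hamilton. Your part (i) is carried out correctly: substituting $A^2=\mathrm{Adj}(A)+tA-sI$ into $A^n=b_2A^2+b_1A+b_0I$ does telescope to coefficients $a(n-2)$, $a(n-1)$ and $a(n)-t\,a(n-1)$, and the identity $\frac{(n-i-2j)!}{i!\,j!\,(n-2i-3j)!}=\binom{i+j}{j}\binom{n-i-2j}{i+j}$ is the right way to reconcile $c(i_2,i_3,n)$ with the stated $a_n$. (Two small caveats: you should quote the exponent of $s_1$ from Theorem~\ref{t1}, since the formula for $a(n)$ as printed in Theorem~\ref{t2} has a typo in that exponent; and the identity $\mathrm{Adj}(A)=A^2-tA+sI$ should be justified for singular $A$ as well, e.g.\ by Zariski density, since the relation $A\cdot\mathrm{Adj}(A)=dI$ alone does not determine $\mathrm{Adj}(A)$ when $d=0$.)

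In part (ii), however, you assert that collecting terms ``yields the stated formula'' when it does not quite: substituting $B=tI-\mathrm{Adj}(B)$ into $B^n=a(n-1)B+\bigl(a(n)-t\,a(n-1)\bigr)I$ gives
\[
B^n=a(n)\,I-a(n-1)\,\mathrm{Adj}(B),
\]
with a \emph{minus} sign on the adjugate term, whereas the corollary claims $B^n=b_nI+b_{n-1}\mathrm{Adj}(B)$. A quick check with $B=\left(\begin{smallmatrix}1&1\\0&1\end{smallmatrix}\right)$, for which $b_n=n+1$, confirms that the minus sign is correct and the printed statement is off by a sign (it is your version, not the printed one, that is consistent with the form $y_{n-1}B+(y_n-t\,y_{n-1})I$ used in Corollary~\ref{c2}). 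So your method is sound and in fact detects an error in the statement; you should say so explicitly rather than claim agreement with the formula as printed.
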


\begin{corollary}\label{c2}
Let $\theta \in K$,  $B \in M_2(K)$ and $t$ denote the trace and
$d$ the determinant of $B$. We have the following identity
in $M_2(K)$ :
\begin{equation*}
( a_{n-1} - \theta a_{n-2}) B + (a_n-(\theta+ t)a_{n-1}+ \theta a_{n-2} t)I
 =
y_{n-1}B + ( y_n - t\,y_{n-1}) I,
\end{equation*}
where
\begin{equation*}
a_n = \sum_{2i+3j \leq n} (-1)^i \binom{i+j }{ j} \binom{n-i-2j}{ i+j}
(\theta + t )^{n-2i-3j} (\theta t+ d)^i (\theta d )^j
\end{equation*}
and
\[
y_n = \sum \binom{n-i }{ i} (-1)^i t^{n-2i} d^i.
\]
In particular,
for any $\theta \in K$, one has
\[
b_n - ( \theta +1)b_{n-1} + \theta b_{n-2} = 1,
\]
where
\[
b_n = \sum_{2i+3j \leq n} (-1)^i \binom{i+j}{j} \binom{n-i-2j}{ i+j}
(\theta +2)^{n-2i-3j}(1+ 2 \theta)^i \theta^j.
\]
\end{corollary}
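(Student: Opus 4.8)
The plan is to recognize the coefficients $a_n$ of the Corollary as those of Corollary~\ref{c1}(i) after a substitution that corresponds to enlarging $B$ to a $3\times 3$ matrix. Observe that $a_n$ here is exactly the quantity of Corollary~\ref{c1}(i), but with the trace replaced by $\theta+t$, the middle coefficient replaced by $\theta t+d$, and the determinant replaced by $\theta d$. These are precisely the coefficients of the cubic $X^3-(\theta+t)X^2+(\theta t+d)X-\theta d$, which factors as $(X-\theta)(X^2-tX+d)$. I would therefore introduce the block-diagonal matrix $A=\begin{pmatrix}\theta & 0\\ 0 & B\end{pmatrix}\in M_3(K)$, whose characteristic polynomial is $(X-\theta)\det(XI-B)=X^3-(\theta+t)X^2+(\theta t+d)X-\theta d$. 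Thus $A$ realizes exactly the substitution built into $a_n$.

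Next I would apply Corollary~\ref{c1}(i) to $A$, giving $A^n=a_{n-1}A+a_{n-2}\,\mathrm{Adj}(A)+(a_n-(\theta+t)a_{n-1})I$ for $n\geq 3$, and then read off the lower-right $2\times 2$ block of both sides. Since $A$ is block-diagonal, $A^n$ has lower-right block $B^n$; a short computation (valid first for invertible $A$, then for all $A$ by density since the adjugate is polynomial in the entries) shows that $\mathrm{Adj}(A)$ is block-diagonal with lower-right block $\theta\,\mathrm{Adj}(B)$. Extracting the block yields $B^n=a_{n-1}B+\theta a_{n-2}\,\mathrm{Adj}(B)+(a_n-(\theta+t)a_{n-1})I$. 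Substituting the $2\times 2$ adjugate identity $\mathrm{Adj}(B)=tI-B$ and collecting terms reproduces exactly the left-hand side of the statement, so the left-hand side equals $B^n$.

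For the right-hand side I would apply Corollary~\ref{c1}(ii) to $B$ and rewrite its conclusion in the basis $\{B,I\}$ using $\mathrm{Adj}(B)=tI-B$ together with the recurrence $y_n=ty_{n-1}-dy_{n-2}$ satisfied by the $y_n$; this gives $B^n=y_{n-1}B+(y_n-ty_{n-1})I$, which is the stated right-hand side. Hence both sides equal $B^n$ and the matrix identity follows. Choosing $B$ non-scalar, so that $B$ and $I$ are linearly independent over $K$, then lets me equate coefficients, producing the two scalar identities $a_{n-1}-\theta a_{n-2}=y_{n-1}$ and $a_n-(\theta+t)a_{n-1}+\theta t\,a_{n-2}=y_n-ty_{n-1}$, valid identically in $\theta,t,d$.

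Finally, for the ``in particular'' claim I would specialize $t=2$, $d=1$, so that $b_n$ becomes this special $a_n$ and $y_m=\sum_i\binom{m-i}{i}(-1)^i2^{m-2i}=m+1$. The first scalar identity then reads $b_m-\theta b_{m-1}=m+1$ for all $m$; subtracting the instance $m=n-1$ from the instance $m=n$ telescopes to $b_n-(\theta+1)b_{n-1}+\theta b_{n-2}=(n+1)-n=1$. I expect the only delicate point to be the second step: correctly computing the lower-right block of $\mathrm{Adj}(A)$ (in particular, obtaining the factor $\theta$ and justifying the formula for possibly singular $A$) and verifying that the substitution $\mathrm{Adj}(B)=tI-B$ reproduces precisely the stated coefficients of $B$ and $I$.
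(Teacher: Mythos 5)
Your argument is correct in substance, and it supplies a proof that the paper actually omits (Corollary \ref{c2} is stated with no proof at all). The block-diagonal embedding $A=\mathrm{diag}(\theta,B)$ is exactly the right device: its characteristic polynomial is $X^3-(\theta+t)X^2+(\theta t+d)X-\theta d$, Corollary \ref{c1}(i) applies with $t\mapsto\theta+t$, $s\mapsto\theta t+d$, $d\mapsto\theta d$, the lower-right block of $\mathrm{Adj}(A)$ is indeed $\theta\,\mathrm{Adj}(B)$ (which you can also get by computing cofactors directly, avoiding the density argument for singular $A$), and $\mathrm{Adj}(B)=tI-B$ converts the extracted block identity into the stated left-hand side. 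The specialization $t=2$, $d=1$, $y_m=m+1$ and the telescoping for the ``in particular'' claim are also right. Two remarks. First, your derivation of the right-hand side from Corollary \ref{c1}(ii) does not go through as written: with the classical adjugate $\mathrm{Adj}(B)=tI-B$, the printed formula $B^n=y_nI+y_{n-1}\mathrm{Adj}(B)$ yields $B^n=-y_{n-1}B+(y_n+ty_{n-1})I$, which has the wrong signs, and no use of the recurrence $y_n=ty_{n-1}-dy_{n-2}$ repairs this; in fact Corollary \ref{c1}(ii) as printed is inconsistent with Theorem \ref{t2} at $k=2$ (test $n=2$: it gives $(2t^2-d)I-tB$ rather than $tB-dI$). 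The identity you actually need, $B^n=y_{n-1}B+(y_n-ty_{n-1})I$, is correct and is precisely Theorem \ref{t2} with $k=2$ (equivalently $B^n=y_{n-1}B-dy_{n-2}I$, an easy induction from Cayley--Hamilton), so cite that instead. Second, the two scalar identities you extract by choosing $B$ non-scalar also drop out of Theorem \ref{t1} with no adjugate bookkeeping: taking $x_1=\theta$ and $x_2,x_3$ the roots of $X^2-tX+d$ gives $a_n=\sum_{m=0}^{n}\theta^{n-m}y_m$, whence $a_{n-1}-\theta a_{n-2}=y_{n-1}$ and $a_n-\theta a_{n-1}=y_n$ immediately, and these two relations are exactly the coefficient identities your matrix computation produces.
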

\begin{corollary}
The numbers $c_n = \sum_{2i+3j = n} (-1)^i \binom{i+j }{ j} 2^i3^j$ satisfy
\[
c_n+c_{n-1}-2c_{n-2} = 1.
\]
\end{corollary}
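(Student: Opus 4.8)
The plan is to read this off as a single specialization of the concluding identity of Corollary \ref{c2}. That identity asserts, for every $\theta \in K$, the recursion
\[
b_n - (\theta+1)b_{n-1} + \theta b_{n-2} = 1,
\]
where
\[
b_n = \sum_{2i+3j \leq n} (-1)^i \binom{i+j}{j}\binom{n-i-2j}{i+j}(\theta+2)^{n-2i-3j}(1+2\theta)^i \theta^j .
\]
First I would match coefficients: to turn the left-hand side into $c_n + c_{n-1} - 2c_{n-2}$ I need $-(\theta+1) = 1$ together with $\theta = -2$, and both demands are met by the single value $\theta = -2$. With this choice the recursion already reads $b_n + b_{n-1} - 2b_{n-2} = 1$, which is exactly the asserted relation, so it remains only to check that the $b_n$ produced at $\theta = -2$ coincide with the numbers $c_n$.

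The decisive feature of the choice $\theta = -2$ is that the factor $(\theta+2)^{n-2i-3j}$ becomes $0^{\,n-2i-3j}$, which vanishes whenever $n-2i-3j>0$ and, under the convention $0^0 = 1$, equals $1$ precisely on the diagonal $2i+3j = n$. Hence the double sum defining $b_n$ collapses to a single sum over the partitions $2i+3j = n$. On that diagonal one has $n-i-2j = (2i+3j)-i-2j = i+j$, so the surviving binomial coefficient is $\binom{n-i-2j}{i+j} = \binom{i+j}{i+j} = 1$ and drops out. Substituting $1+2\theta = -3$ and $\theta = -2$ into what remains then yields
\[
b_n = \sum_{2i+3j=n} (-1)^i \binom{i+j}{j}(-3)^i(-2)^j ,
\]
and collecting the constants $(-1)^i(-3)^i = 3^i$ and $(-2)^j = (-1)^j 2^j$ identifies this with the sum defining $c_n$. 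Since $b_n = c_n$, the recursion transfers verbatim.

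The one step I would handle with care is precisely that final collection of signs and constants, since it is where the exact form of the summand for $c_n$ is pinned down; this is also the only point at which a sign slip could go unnoticed. A convenient independent check is the generating function: writing $C(x) = \sum_{n\geq 0} c_n x^n$ and summing $\binom{i+j}{j}$ geometrically over the diagonal produces the closed form $C(x) = 1/(1-3x^2+2x^3)$, whence the claimed recursion is equivalent to $(1+x-2x^2)C(x) = (1-x)^{-1}$, i.e. to the polynomial identity $(1-x)(1+x-2x^2) = 1-3x^2+2x^3$, which is immediate. I expect no genuine obstacle beyond this bookkeeping: the structural work is carried entirely by Corollary \ref{c2}, and $\theta = -2$ is forced both by the coefficient matching and by the requirement $\theta+2 = 0$ that triggers the collapse.
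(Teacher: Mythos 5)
Your route is exactly the paper's: the published proof is the one--line remark that this is Corollary~\ref{c2} with $\theta=-2$, together with the observation that the sum collapses onto the diagonal $2i+3j=n$. Your execution of the specialization is also correct up to the penultimate line: at $\theta=-2$ the surviving sum is
\[
b_n=\sum_{2i+3j=n}(-1)^i\binom{i+j}{j}(-3)^i(-2)^j=\sum_{2i+3j=n}\binom{i+j}{j}\,3^i(-2)^j .
\]
The gap is the very last step, the one you yourself flagged as delicate: this is \emph{not} the sum defining $c_n$ in the statement, which is $\sum_{2i+3j=n}\binom{i+j}{j}(-2)^i3^j$. Because the constraint $2i+3j=n$ is not symmetric in $i$ and $j$, attaching $3$ to $i$ and $-2$ to $j$ gives a genuinely different sequence from attaching $-2$ to $i$ and $3$ to $j$. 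Concretely, $b_2=3$ while $c_2=(-1)\binom{1}{0}2^1 3^0=-2$, and the literal $c_n$ of the statement fails the claimed recursion: $c_4+c_3-2c_2=4+3+4=11\neq 1$. So the identification ``$b_n=c_n$'' is false as written, and the recursion does not transfer.

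The fault here lies with the statement rather than with your strategy: the corollary as printed contains a transposition typo, and the sequence that actually satisfies $c_n+c_{n-1}-2c_{n-2}=1$ is $\sum_{2i+3j=n}(-1)^j\binom{i+j}{j}3^i2^j$, i.e.\ your $b_n$. Note also that your generating-function ``independent check'' silently confirms this diagnosis rather than the claim: summing $\binom{i+j}{j}a^i b^j x^{2i+3j}$ gives $1/(1-ax^2-bx^3)$, so $1/(1-3x^2+2x^3)$ is the generating function for the weights $(a,b)=(3,-2)$ -- your $b_n$ -- whereas the literal $c_n$ has generating function $1/(1+2x^2-3x^3)=1/\bigl((1-x)(1+x+3x^2)\bigr)$ and in fact satisfies $c_n+c_{n-1}+3c_{n-2}=1$ instead. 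To make the argument airtight you should either restate the corollary with the corrected weights or exhibit the small counterexample above and note that no value of $\theta$ produces the printed $c_n$, since $1+x+3x^2$ does not factor as $(1-x)(1-\theta x)$.
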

\begin{proof}
This is the special case of Corollary \ref{c2} where we take $\theta = -2$.
Note that the sum defining $c_n$ is over only those $i,j$ for which $2i+3j=n$.
\end{proof}

Note than when $k=3$, Theorem \ref{t1} can be rewritten as follows:
\begin{theorem}\label{t3}
Let $n$ be a positive integer and $x$, $y$, $z$ be indeterminates. Then
\begin{multline}\label{m1}
\sum_{2i+3j \leq n} (-1)^i \binom{i+j }{ j} \binom{n-i-2j }{ i+j} (x+y+z)^{n-2i-3j}
(xy+yz+zx)^i (xyz)^j\\
=
\frac{x\,y\,\left( x^{n+1} - y^{n+1} \right)  - x\,z\,\left( x^{n+1} - z^{n+1} \right)  +
    y\,z\,\left( y^{n+1} - z^{n+1} \right) }{\left( x - y \right) \,\left( x - z \right) \,
    \left( y - z \right) }.
\end{multline}
\end{theorem}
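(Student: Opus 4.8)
\emph{Proof proposal for Theorem \ref{t3}.}

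The plan is to reduce the claim to the standard closed form for the complete homogeneous symmetric polynomial in three variables, and then to verify that this closed form is precisely the rational expression on the right of \eqref{m1}. First I would observe that the left-hand side of \eqref{m1} is exactly the right-hand side of Theorem \ref{t1} specialized to $k=3$. Indeed, setting $s_1 = x+y+z$, $s_2 = xy+yz+zx$, $s_3 = xyz$ and renaming $(i_2,i_3)=(i,j)$, the coefficient factors as
\[
c(i,j,n) = \frac{(n-i-2j)!}{i!\,j!\,(n-2i-3j)!} = \frac{(i+j)!}{i!\,j!}\cdot\frac{(n-i-2j)!}{(i+j)!\,(n-2i-3j)!} = \binom{i+j}{j}\binom{n-i-2j}{i+j},
\]
while the sign bookkeeping gives $(-s_2)^i = (-1)^i s_2^i$ and $((-1)^{3-1}s_3)^j = s_3^j$. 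Hence, by Theorem \ref{t1}, the left-hand side of \eqref{m1} equals the complete homogeneous symmetric polynomial
\[
h_n(x,y,z) := \sum_{r_1+r_2+r_3 = n} x^{r_1}y^{r_2}z^{r_3}.
\]
It therefore remains to prove that $h_n(x,y,z)$ equals the right-hand side of \eqref{m1}, an identity I would establish in the field of fractions $K(x,y,z)$, where $x-y$, $x-z$, $y-z$ are invertible.

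For this central step I would use the generating function
\[
\sum_{n\geq 0} h_n(x,y,z)\,t^n = \frac{1}{(1-xt)(1-yt)(1-zt)},
\]
which is immediate upon expanding each factor as a geometric series and collecting the coefficient of $t^n$. A partial-fraction decomposition in $t$ gives
\[
\frac{1}{(1-xt)(1-yt)(1-zt)} = \frac{A}{1-xt} + \frac{B}{1-yt} + \frac{C}{1-zt},
\]
and evaluating the standard residues yields $A = x^2/[(x-y)(x-z)]$, $B = y^2/[(y-x)(y-z)]$, and $C = z^2/[(z-x)(z-y)]$. Comparing coefficients of $t^n$ then produces the closed form
\[
h_n(x,y,z) = \frac{x^{n+2}}{(x-y)(x-z)} + \frac{y^{n+2}}{(y-x)(y-z)} + \frac{z^{n+2}}{(z-x)(z-y)}.
\]

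To finish, I would clear denominators, writing the last display over the common denominator $(x-y)(x-z)(y-z)$, so that its numerator becomes
\[
x^{n+2}(y-z) - y^{n+2}(x-z) + z^{n+2}(x-y).
\]
It then remains only to check that the numerator appearing in \eqref{m1}, namely $xy(x^{n+1}-y^{n+1}) - xz(x^{n+1}-z^{n+1}) + yz(y^{n+1}-z^{n+1})$, expands to this same expression; grouping its six monomials according to the powers $x^{n+2}$, $y^{n+2}$, $z^{n+2}$ confirms the match term by term. I do not expect a genuine obstacle here: every step is routine, and the only point requiring care is the consistent tracking of signs in the partial-fraction residues and in the final numerator comparison. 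As an alternative to the generating-function computation, one could instead sum the inner two-variable geometric series, using $\sum_{a+b=m}x^ay^b = (x^{m+1}-y^{m+1})/(x-y)$ and then summing over the third variable $z$; this route reproduces the right-hand side of \eqref{m1} in essentially its stated form, at the cost of slightly more bookkeeping.
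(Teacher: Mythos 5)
Your proposal is correct, and every computation in it checks out: the coefficient factorization $c(i,j,n)=\binom{i+j}{j}\binom{n-i-2j}{i+j}$, the sign bookkeeping $(-s_2)^i=(-1)^is_2^i$ and $((-1)^2s_3)^j=s_3^j$, the partial-fraction residues, and the final numerator comparison $x^{n+2}(y-z)-y^{n+2}(x-z)+z^{n+2}(x-y)$ all agree. However, your route is genuinely different from the paper's. The paper proves Theorem~\ref{t3} by feeding the companion matrix of $(T-x)(T-y)(T-z)$ into Corollary~\ref{c1}: the $(1,2)$ entry of the right side of \eqref{eq1} is exactly $a_{n-1}$ (the left side of \eqref{m1} shifted by one), while the $(1,2)$ entry of $A^n$ is computed by diagonalizing $A$, which produces the rational expression on the right of \eqref{m1}. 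You instead bypass the matrix entirely: you recognize the left side of \eqref{m1} as the $k=3$ case of Theorem~\ref{t1}, hence as the complete homogeneous symmetric polynomial $h_n(x,y,z)$, and then derive its classical closed form $\sum_i x_i^{n+2}/\prod_{j\neq i}(x_i-x_j)$ from the generating function $\prod_i(1-x_it)^{-1}$. The paper's argument stays within the matrix-power machinery that is its organizing theme and reuses Corollary~\ref{c1} directly, at the cost of the diagonalization and entry-tracking; your argument is more self-contained on the symmetric-function side, makes it transparent that the theorem is Theorem~\ref{t1} plus a standard identity for $h_n$, and generalizes verbatim to arbitrary $k$. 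One minor point worth stating explicitly if you write this up: the identity is asserted in the polynomial ring, and you verify it in $K(x,y,z)$; this suffices because the polynomial ring embeds in its fraction field, but the sentence deserves to be said.
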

\begin{proof}
In Corollary \ref{c1}, let
\[
A=
\left (
 \begin{matrix}
x + y + z & 1 & 0 \cr
 - x\,y   - x\,z - y\,z & 0 & 1 \cr
 x\,y\,  z & 0 & 0 \cr
\end{matrix}
\right ).
\]
Then $t=x+y+z$, $s=xy+xz+yz$ and $d=xyz$. It is easy to show
(by first diagonalizing $A$) that the $(1,2)$ entry of $A^{n}$ equals the
right side of \eqref{m1}, with $n+1$ replaced by $n$, and the $(1,2)$ entry
on the  right side of \eqref{eq1} is $a_{n-1}$.
\end{proof}
\begin{corollary}
Let $x$ and $z$ be indeterminates and $n$ a positive integer. Then
\begin{multline*}
\sum_{2i+3j \leq n} (-1)^i \binom{i+j }{ j} \binom{n-i-2j}{ i+j}
 (2x+z)^{n-2i-3j}
(x^2+2xz)^i (x^2z)^j\\
=\frac{x^{2 + n} + n\,x^{1 + n}\,\left( x - z \right)  - 2\,x^{1 + n}\,z + z^{2 + n}}
  {{\left( x - z \right) }^2}.
\end{multline*}
\end{corollary}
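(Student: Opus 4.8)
The plan is to obtain this identity as the specialization $y=x$ of the three–variable identity \eqref{m1} in Theorem \ref{t3}. First I would observe that under $y=x$ the three symmetric functions on the left of \eqref{m1} become exactly those appearing in the corollary: $x+y+z\mapsto 2x+z$, $\;xy+yz+zx\mapsto x^2+2xz$, and $xyz\mapsto x^2z$, while the binomial coefficients and the summation range $2i+3j\le n$ are unchanged. Hence the left–hand side of \eqref{m1} evaluated at $y=x$ is literally the left–hand side of the corollary, and everything reduces to evaluating the right–hand side of \eqref{m1} at $y=x$.

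The right–hand side of \eqref{m1} is a genuine polynomial in $x,y,z$ (it equals the polynomial on the left), so although it is written as a quotient carrying the factor $(x-y)$ in its denominator, it has a removable singularity along $y=x$ and may be evaluated there by passing to the limit $y\to x$. I would compute this limit by L'H\^opital's rule in the variable $y$: expanding and regrouping the numerator as $x^{n+2}(y-z)-y^{n+2}(x-z)+z^{n+2}(x-y)$ and the denominator as $(x-y)(x-z)(y-z)$, both vanish at $y=x$; a single differentiation in $y$ clears the $0/0$ (the $y$–derivative of the denominator is $-(x-z)^2\neq0$ at $y=x$), and setting $y=x$ gives
\[
\frac{x^{n+2}-(n+2)x^{n+1}(x-z)-z^{n+2}}{-(x-z)^2}.
\]
A short rearrangement of the numerator shows this equals the stated closed form $\bigl(x^{n+2}+nx^{n+1}(x-z)-2x^{n+1}z+z^{n+2}\bigr)/(x-z)^2$, completing the argument.

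Conceptually, the cleaner route is to note that the right–hand side of \eqref{m1} is nothing but the complete homogeneous symmetric polynomial $h_n(x,y,z)=\sum_{a+b+c=n}x^a y^b z^c$; this is precisely the content of Theorem \ref{t1} for $k=3$, whose two sides are the two sides of \eqref{m1}. The specialization is then transparent: $h_n(x,x,z)=\sum_{c=0}^{n}(n+1-c)\,x^{\,n-c}z^{\,c}=\sum_{k=0}^{n}(k+1)x^k z^{n-k}$, and summing this series (it is the $x$–derivative of $x\cdot\frac{x^{n+1}-z^{n+1}}{x-z}$, or a standard $\sum(k+1)t^k$ evaluation) produces the same quotient. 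I expect no genuine obstacle: the single mildly delicate point is justifying the evaluation at $y=x$, i.e. recognizing the removable singularity of the rational expression, after which the computation is either a one–step L'H\^opital limit or a routine geometric–type summation.
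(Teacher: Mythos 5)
Your proposal is correct and takes essentially the same route as the paper, whose entire proof is ``Let $y\to x$ in Theorem \ref{t3}''; you are merely supplying the removable-singularity/L'H\^opital details (and the equivalent $h_n(x,x,z)=\sum_{c=0}^{n}(n-c+1)x^{n-c}z^{c}$ summation) that the paper leaves to the reader, and both computations check out against the stated closed form.
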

\begin{proof}
Let $y\to x$ in Theorem \ref{t3}.
\end{proof}
Some interesting identities can be derived by specializing the
variables in Theorem \ref{t1}. For instance, in \cite{S04}, it was
noted that Binet's formula for the Fibonacci numbers is a
consequence of Theorem \ref{t1} for $k=2$. Here is a
generalization.

\begin{corollary}
(Generalization of Binet's formula)\\
Let the numbers $F_k(n)$ be defined by the recursion
\[F_k(0) = 1, F_k(r) = 0,\,\, \forall r < 0,\]
\[F_k(n) = F_k(n-1)+ F_k(n-2) + \cdots + F_k(n-k).\]
Then, we have
\[F_k(n) = \sum_{2i_2+ \cdots +ki_k \leq n} \frac{ (n-i_2-2i_3- \cdots -(k-1)i_k )!}{
i_1!i_2! \cdots i_k! (n-2i_2-3i_3- \cdots - ki_k )!}.\]
Further, this equals $\sum_{r_1+ \cdots + r_k=n} \lambda_1^{r_1} \cdots \lambda_k^{r_k}$
where $\lambda_i , 1 \leq i \leq k$ are the roots of the equation
$T^k-T^{k-1}-T^{k-2} - \cdots - 1 = 0$.
\end{corollary}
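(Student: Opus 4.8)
The plan is to specialize Theorem~\ref{t1} at $x_i = \lambda_i$, the roots of $T^k - T^{k-1} - \cdots - T - 1$, and then to identify each side of the resulting identity with $F_k(n)$. First I would record, via Vieta's formulas, the elementary symmetric functions of the $\lambda_i$. Writing the characteristic polynomial as $\prod_i (T-\lambda_i) = \sum_{j=0}^k (-1)^j s_j T^{k-j}$ and comparing with $T^k - T^{k-1} - \cdots - T - 1$, in which every coefficient below the leading one equals $-1$, gives $(-1)^j s_j = -1$ for $1 \le j \le k$, i.e. $s_j = (-1)^{j-1}$.

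With these values the right-hand side of Theorem~\ref{t1} collapses. Indeed $s_1 = 1$ kills the factor $s_1^{\,n-2i_2-\cdots-ki_k}$, and for each $j \ge 2$ the factor $((-1)^{j-1}s_j)^{i_j} = \bigl((-1)^{j-1}(-1)^{j-1}\bigr)^{i_j} = 1$. Hence the sum reduces to $\sum_{2i_2+\cdots+ki_k \le n} c(i_2,\ldots,i_k,n)$. To match the displayed formula I would set $i_1 := n - 2i_2 - 3i_3 - \cdots - ki_k$ (the exponent that $s_1$ had carried); then the numerator $(n - i_2 - 2i_3 - \cdots - (k-1)i_k)!$ equals $(i_1 + i_2 + \cdots + i_k)!$, so that $c$ is exactly the multinomial coefficient $\binom{i_1+\cdots+i_k}{i_1,\ldots,i_k}$, and the constraint $2i_2+\cdots+ki_k \le n$ is precisely $i_1 \ge 0$.

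Meanwhile the left-hand side of Theorem~\ref{t1} is, by definition, the complete homogeneous symmetric function $h_n := \sum_{r_1+\cdots+r_k=n} \lambda_1^{r_1}\cdots\lambda_k^{r_k}$, which is the second expression in the statement; so it remains only to prove $h_n = F_k(n)$. The cleanest route reuses the recursion already recorded in the proof of Theorems~\ref{t1} and~\ref{t2}: either side $a(n)$ satisfies $a(n) = s_1 a(n-1) - s_2 a(n-2) + \cdots + (-1)^{k-1} s_k a(n-k)$. Substituting $s_j = (-1)^{j-1}$ turns every coefficient $(-1)^{j-1}s_j$ into $1$, so $h_n = h_{n-1} + h_{n-2} + \cdots + h_{n-k}$, which is the very recursion defining $F_k$; checking the initial data (with the convention $h_m = 0$ for $m<0$ and $h_0 = 1$) then identifies $h_n = F_k(n)$ by induction. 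Equivalently, and more transparently, I would compute $\sum_n h_n t^n = \prod_i (1-\lambda_i t)^{-1}$ and use $\prod_i (1 - \lambda_i t) = 1 - t - t^2 - \cdots - t^k$ (obtained from $\prod_i(T-\lambda_i)$ by the substitution $T \mapsto 1/t$ and multiplication by $t^k$) to get $\sum_n h_n t^n = (1 - t - \cdots - t^k)^{-1}$, manifestly the generating function of $F_k$.

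The only step demanding real care is the sign bookkeeping in the two collapses: verifying $s_j = (-1)^{j-1}$ and then that each surviving factor of Theorem~\ref{t1} is exactly $1$; everything after that is a routine matching of generating functions or of a linear recursion with its initial data. I would also double-check the denominator of the displayed coefficient against the multinomial interpretation above, namely that $i_1! = (n-2i_2-\cdots-ki_k)!$ with $i_1 = n - 2i_2 - \cdots - ki_k$ should enter the product only once.
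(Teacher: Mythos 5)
Your proposal is correct and follows essentially the same route as the paper, whose one-line proof simply observes that the recursion defining $F_k(n)$ is the case $s_1=-s_2=\cdots=(-1)^{k-1}s_k=1$ of Theorem~\ref{t1}; you have merely filled in the Vieta computation $s_j=(-1)^{j-1}$, the collapse of the right-hand side, and the matching of initial conditions. Your closing remark is also well taken: the displayed coefficient in the corollary lists both $i_1!$ and $(n-2i_2-\cdots-ki_k)!$ in the denominator, which duplicates the same factor (a typo relative to the $c(i_2,\ldots,i_k,n)$ of Theorem~\ref{t1}), and the intended expression is the multinomial coefficient you describe.
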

\begin{proof}
The recursion defining $F_k(n)$'s corresponds to the case $s_1=-s_2= \cdots = (-1)^{k-1}s_k=1$ of the theorem.
\end{proof}

\begin{corollary}
\begin{equation*}
\sum c(i_2,\cdots,i_k,n)
k^{n}\prod_{j=2}^{k} \left(
(-1)^{j-1}k^{-j}
\binom{k}{j}
\right)^{i_{j}}
=
\binom{n+k-1}{ k}.
\end{equation*}
where
\[
c(i_2,\cdots,i_k,n) =
\frac{ (n-i_2-2i_3- \cdots -(k-1)i_k )!}{
i_2! \cdots i_k! (n-2i_2-3i_3- \cdots - ki_k )!}.
\]
\end{corollary}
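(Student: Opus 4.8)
The plan is to read this off Theorem~\ref{t1} by a single specialization, namely evaluating both sides at the point $x_1 = x_2 = \cdots = x_k = 1$. First I would record the values of the elementary symmetric polynomials there: at the all-ones point one has $s_j = \binom{k}{j}$ for every $j$, and in particular $s_1 = k$. These are exactly the quantities that appear in the summand of the corollary, which is a strong hint that the corollary is simply Theorem~\ref{t1} specialized.

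Next I would massage the typical term on the right-hand side of Theorem~\ref{t1} so as to factor out a clean $k^n$. Writing $2i_2 + 3i_3 + \cdots + k i_k = \sum_{j=2}^{k} j\,i_j$, the power of $s_1 = k$ splits as
\[
s_1^{\,n - 2i_2 - 3i_3 - \cdots - k i_k} = k^{n}\prod_{j=2}^{k}\bigl(k^{-j}\bigr)^{i_j},
\]
while the remaining factors become $\prod_{j=2}^{k}\bigl((-1)^{j-1}s_j\bigr)^{i_j} = \prod_{j=2}^{k}\bigl((-1)^{j-1}\binom{k}{j}\bigr)^{i_j}$, the sign $(-1)^{j-1}$ being precisely the one attached to $s_j$ in the statement of Theorem~\ref{t1}. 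Multiplying these together shows that the $(i_2,\dots,i_k)$-term of the right-hand side of Theorem~\ref{t1}, evaluated at the all-ones point, is exactly $c(i_2,\dots,i_k,n)\,k^{n}\prod_{j=2}^{k}\bigl((-1)^{j-1}k^{-j}\binom{k}{j}\bigr)^{i_j}$, i.e.\ the summand of the corollary. Hence the left-hand side of the corollary equals the right-hand side of Theorem~\ref{t1} at $x_1 = \cdots = x_k = 1$.

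Finally I would evaluate the left-hand side of Theorem~\ref{t1} at the same point. The expression $\sum_{r_1+\cdots+r_k=n} 1^{r_1}\cdots 1^{r_k}$ just counts the nonnegative integer solutions of $r_1+\cdots+r_k = n$, which by stars and bars is the number of degree-$n$ monomials in $k$ variables. Equating the two evaluations then yields the asserted identity.

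I do not expect a genuine obstacle: the entire content is a clean substitution, so the only real care needed is the bookkeeping in the second step, i.e.\ correctly tracking the signs $(-1)^{j-1}$ and the negative powers $k^{-j}$ produced when $k^n$ is pulled out of $s_1^{\,n-\sum j i_j}$, together with confirming the values $s_j=\binom{k}{j}$. One subtlety worth flagging is the right-hand side itself: the stars-and-bars count gives $\binom{n+k-1}{k-1}=\binom{n+k-1}{n}$, so the exponent $k$ printed in the statement appears to be a slip for $k-1$.
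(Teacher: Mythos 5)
Your proof is correct and follows exactly the paper's own route: substitute $x_i=1$ for all $i$ into Theorem~\ref{t1}, use $s_j=\binom{k}{j}$, and count the monomials on the left by stars and bars. Your observation that the right-hand side should read $\binom{n+k-1}{k-1}$ rather than $\binom{n+k-1}{k}$ is also correct; that is a typo in the stated corollary.
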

\begin{proof}
Take $x_i=1$ for all $i$ in Theorem 1. The left side of Theorem 1 is simply the sum
$\sum_{r_1 + \cdots + r_k = n} 1$.
\end{proof}
From Theorem\ref{t3} we have the following binomial identities as special cases.
\begin{proposition}
(i)
Let $\lambda$ be the unique positive real number satisfying $\lambda^3 = \lambda + 1$.
Let $x,y$ denote the complex conjugates such that
$xy = \lambda, x+y = \lambda^2,$ and let $z = -\frac{1}{\lambda}.$ Then,

\begin{multline*}
\sum_{2i+3j \leq n} (-1)^j \binom{n-2j}{j}
= \sum_{r+s+t=n} x^ry^sz^t\\
=
\frac{x\,y\,\left( x^{n+1} - y^{n+1} \right)  - x\,z\,\left( x^{n+1} - z^{n+1} \right)  +
    y\,z\,\left( y^{n+1} - z^{n+1} \right) }{\left( x - y \right) \,\left( x - z \right) \,
    \left( y - z \right) }.
\end{multline*}
(ii)
\[
\sum_{2i+3j \leq n} (-1)^j \binom{i+j}{j}\binom{n-i-2j}{i+j} = [(n+2)/2].
\]
(iii)
\[
\sum \binom{n-2j}{j} (-4)^j 3^{n-3j} = \frac{(3n+4)2^{n+1}+(-1)^n}{9}.
\]
(iv)
\begin{multline*}
\sum \binom{n-2j}{j} 3^{n-3j}(-2)^j\\ =
\frac{(1+\sqrt{3})^{n+1}-(1-\sqrt{3})^{n+1}}{2\sqrt{3}} +
\frac{(1+\sqrt{3})^{n+1}+(1-\sqrt{3})^{n+1}}{6} - \frac{1}{3}.
\end{multline*}
\end{proposition}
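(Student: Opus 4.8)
\emph{Proof proposal.}
The plan is to read all four identities as specializations of Theorem~\ref{t3} (for the distinct-root cases) or of the $y\to x$ corollary following it (for the repeated-root cases). In each part the printed left-hand sum is the general summand of Theorem~\ref{t3}, namely $(-1)^i\binom{i+j}{j}\binom{n-i-2j}{i+j}t^{n-2i-3j}s^i d^j$, evaluated at particular values of the elementary symmetric functions $t=x+y+z$, $s=xy+yz+zx$, $d=xyz$. So the first step is to match the summands and read off the required $t,s,d$. In parts (i), (iii) and (iv) only the index $j$ survives, which forces $s=0$ (every term with $i>0$ then vanishes and $\binom{i+j}{j}$ collapses to $1$); reading off the remaining data gives $(t,d)=(1,-1)$, $(3,-4)$ and $(3,-2)$ respectively. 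In part (ii) the sign pattern $(-1)^j$ together with the absence of any power of $t$ forces $t=1$, $s=-1$, $d=-1$.

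Second, I would realize each triple $(t,s,d)$ as the elementary symmetric functions of concrete $x,y,z$, i.e.\ as the roots of $T^3-tT^2+sT-d$. For (i) the variables are supplied through $\lambda$, and one only checks, using $\lambda^3=\lambda+1$, that $t=\lambda^2-1/\lambda=1$, $s=\lambda+\lambda^2(-1/\lambda)=0$, $d=\lambda(-1/\lambda)=-1$ (here $x,y$ are genuinely non-real conjugates, so all three roots are distinct). For (iv) the cubic factors as $(T-1)(T^2-2T-2)$, giving the three \emph{distinct} roots $1$ and $1\pm\sqrt3$. For (ii) and (iii) the cubics are $(T-1)^2(T+1)$ and $(T+1)(T-2)^2$, so two roots coincide, namely $x=y=1,\,z=-1$ and $x=y=2,\,z=-1$.

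Third, I would substitute. Where the roots are distinct (parts (i) and (iv)) the right-hand side of Theorem~\ref{t3} may be plugged in directly; for (iv) this is a short rationalization, clearing $(x-y)(x-z)(y-z)=-6\sqrt3$ and collecting the coefficients of $x^{n+1}$ and $y^{n+1}$ to produce the two Binet-type fractions and the constant $-\tfrac13$. For (i) one records that the middle expression $\sum_{r+s+t=n}x^ry^sz^t$ is precisely the left-hand side of Theorem~\ref{t1} for $k=3$; hence by Theorem~\ref{t1} it equals the (now $s=0$) summand, while Theorem~\ref{t3} evaluates it as the displayed quotient, completing the three-way equality.

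The genuine obstacle is the coincidence of roots in (ii) and (iii): there the quotient in Theorem~\ref{t3} is a $0/0$ form and cannot be substituted blindly. The clean fix for (iii) is to use instead the corollary obtained by letting $y\to x$, evaluated at $x=2,\,z=-1$: then $2x+z=3$, $x^2+2xz=0$, $x^2z=-4$ reproduce the left side (again only $i=0$ survives), and its stated closed form simplifies to $\bigl((3n+4)2^{n+1}+(-1)^n\bigr)/9$. For (ii) the neatest route is to bypass the quotient altogether and evaluate the middle quantity $\sum_{r+s+t=n}x^ry^sz^t$ directly at $x=y=1,\,z=-1$; this collapses to $\sum_{t=0}^{n}(-1)^t(n+1-t)$, an elementary alternating sum equal to $\lfloor(n+2)/2\rfloor$, matching the claim. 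Thus every part reduces, after the symmetric-function bookkeeping, to a single substitution plus a short simplification, the only care required being the limiting (derivative) form of Theorem~\ref{t3} when two variables agree.
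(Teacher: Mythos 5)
Your proposal is correct and follows exactly the route the paper intends: the paper offers no written proof, merely asserting that these identities are ``special cases'' of Theorem~\ref{t3}, and your identification of $(t,s,d)$ as $(1,0,-1)$, $(1,-1,-1)$, $(3,0,-4)$, $(3,0,-2)$ with the corresponding roots is the intended specialization. Your explicit handling of the repeated-root cases (ii) and (iii) --- routing (iii) through the $y\to x$ corollary and (ii) through a direct evaluation of $\sum_{r+s+t=n}x^ry^sz^t$ at $x=y=1$, $z=-1$ --- supplies a detail the paper silently glosses over, and is exactly the right fix.
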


\section{Commutating Matrices}

In this section we derive various combinatorial identities by writing a general
$3 \times 3$ matrix $A$ as a product of commuting matrices.
\begin{proposition}\label{p2}
Let  $A$ be an arbitrary $3\times 3$ matrix with characteristic equation
$x^{3}-t x^{2}+s\,x-d=0$, $d \not = 0$. Suppose $p$ is arbitrary, with
$p^3+p^2t+ps+d \not = 0$, $p \not = 0,\,-t$. If $n$ is a positive integer, then
\begin{multline}\label{m2}
A^{n}=\left (\frac{p\,d}{p^3+p^{2}t+sp+d}\right )^{n}
\sum_{r=0}^{3n}
\sum_{j=0}^{n}
\sum_{k=0}^{n}
\binom{n }{ j}
\binom{n }{ k}
\binom{j }{ r-j-k}\\
\times
\left(\frac{-p(p+t)^{2}}{d}\right)^{j}
\left (\frac{-(p+t)}{p}\right)^{k}
 \left (\frac{-A}{p+t}\right)^{r}.
\end{multline}
\end{proposition}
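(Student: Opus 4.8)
The plan is to exhibit $A$, up to a scalar factor, as a product of two matrices that are polynomials in $A$ (hence commute), and then to expand the resulting $n$-th power by the binomial theorem. Write $P(p)=p^{3}+p^{2}t+sp+d$ for the quantity appearing in the denominator; this is nonzero by hypothesis.

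The starting point is the Cayley--Hamilton relation $A^{3}=tA^{2}-sA+dI$. Using it to eliminate the cubic term, I would verify the factorization
\[
(pI+A)\bigl(-pA^{2}+p(p+t)A+dI\bigr)=P(p)\,A.
\]
Indeed, expanding the left side gives $pd\,I+(p^{3}+p^{2}t+d)A+pt\,A^{2}-pA^{3}$, and substituting $A^{3}=tA^{2}-sA+dI$ cancels the $I$ and $A^{2}$ terms, leaving exactly $P(p)\,A$. Since both factors on the left are polynomials in $A$, they commute, so I may raise the identity to the $n$-th power termwise to obtain
\[
A^{n}=\frac{1}{P(p)^{n}}\,(pI+A)^{n}\bigl(-pA^{2}+p(p+t)A+dI\bigr)^{n}.
\]

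Next I would set $B=-A/(p+t)$ and rewrite the two factors in the normalized form matching the statement. A short computation gives $(pI+A)/p=I+\beta B$ with $\beta=-(p+t)/p$, and $\bigl(-pA^{2}+p(p+t)A+dI\bigr)/d=I+\alpha B(I+B)$ with $\alpha=-p(p+t)^{2}/d$, so that
\[
A^{n}=\Bigl(\frac{pd}{P(p)}\Bigr)^{n}(I+\beta B)^{n}\bigl(I+\alpha B(I+B)\bigr)^{n}.
\]
Applying the binomial theorem to each factor, namely $(I+\beta B)^{n}=\sum_{k}\binom{n}{k}\beta^{k}B^{k}$ and $\bigl(I+\alpha B(I+B)\bigr)^{n}=\sum_{j}\binom{n}{j}\alpha^{j}B^{j}\sum_{m}\binom{j}{m}B^{m}$, multiplying out, and reindexing by $r=j+k+m$ (so that $\binom{j}{m}=\binom{j}{r-j-k}$ and $B^{r}=(-A/(p+t))^{r}$, with $r$ running from $0$ to $3n$) should reproduce the triple sum of \eqref{m2} term for term, once the powers of $\alpha$ and $\beta$ are matched against $\bigl(-p(p+t)^{2}/d\bigr)^{j}$ and $\bigl(-(p+t)/p\bigr)^{k}$.

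I expect the main obstacle to be bookkeeping rather than conceptual. The conceptual heart is the one-line factorization: its whole point is that Cayley--Hamilton forces the vanishing of the $I$ and $A^{2}$ terms, and this is precisely what pins down the scalar coefficients $\alpha,\beta$ and the prefactor $(pd/P(p))^{n}$. The step requiring real care is the reindexing, where one must check that summing $m$ over $0\le m\le j$ inside the expansion of $(I+B)^{j}$ yields exactly $\binom{j}{r-j-k}$ after the substitution, and that the stated index ranges ($0\le j,k\le n$ with $r$ free up to $3n$) are consistent with the binomial coefficient $\binom{j}{r-j-k}$ vanishing outside $0\le r-j-k\le j$.
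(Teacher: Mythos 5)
Your proposal is correct and follows essentially the same route as the paper: the identity $(pI+A)\bigl(-pA^{2}+p(p+t)A+dI\bigr)=(p^{3}+p^{2}t+sp+d)A$ is exactly the paper's commuting factorization (written with the opposite sign distributed), and raising it to the $n$-th power and expanding binomially is precisely what the paper means by ``collecting powers of $A$.'' Your version simply makes explicit the Cayley--Hamilton verification and the reindexing $r=j+k+m$ that the paper leaves to the reader.
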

\begin{proof}
This follows from the identity
\[
A=\frac{-1}{p^3+p^{2}t+sp+d}\left(p A^2  - A p (p +  t)- d\, I\right)\left(A + p\,I\right),
\]
after raising both sides to the $n$-th power and collecting powers of $A$.
Note that the two matrices $p A^2  - A p (p +  t)- d\, I$ and $A + p\,I$
commute.
\end{proof}

\begin{corollary}
Let $p$, $x$, $y$ and $z$ be indeterminates and let $n$
be a positive integer. Then
\begin{align*}
 \sum_{r = 0}^{3\,n}&
      \sum_{j = 0}^{n}\sum_{k = 0}^{n}
\binom{n }{ j}
\binom{n }{ k}
\binom{j }{ r-j-k}(-1)^{j+k+r}
\left(\frac{p(p+x+y+z)^{2}}{x y z}\right)^{j}\\
&\times
\left (\frac{p+x+y+z}{p}\right)^{k}
 \frac{x\,y\,\left( x^r - y^r \right)  -
  x\,z\,\left( x^r - z^r \right)  +
  y\,z\,\left( y^r - z^r \right)}{(p+x+y+z)^{r}}\\
\
&=\left(x\,y\,\left( x^n - y^n \right)  -
  x\,z\,\left( x^n - z^n \right)  +
  y\,z\,\left( y^n - z^n \right) \right)\\
&\phantom{adasaaasssasd}\times \left( \frac{p^3  +
         p^2\,\left( x + y + z \right)  +
         p\,\left( x\,y + x\,z + y\,z \right)+ x\,y\,z}
       {p\,x\,y\,z } \right)^n.
\end{align*}
\end{corollary}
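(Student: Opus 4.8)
The plan is to specialize Proposition \ref{p2} to the companion-type matrix $A$ used in the proof of Theorem \ref{t3} (the one with $t=x+y+z$, $s=xy+xz+yz$, $d=xyz$), and then to read off the $(1,2)$ entry of both sides of the matrix identity \eqref{m2}. For this matrix one has $p+t=p+x+y+z$, so the three scalar factors $\tfrac{-p(p+t)^2}{d}$, $\tfrac{-(p+t)}{p}$ and $\tfrac{pd}{p^3+p^2t+sp+d}$ appearing in \eqref{m2} become exactly the three expressions displayed in the corollary. Writing $P_r=xy(x^r-y^r)-xz(x^r-z^r)+yz(y^r-z^r)$ and $D=(x-y)(x-z)(y-z)$, the proof of Theorem \ref{t3} already records that the $(1,2)$ entry of $A^r$ is $P_r/D$ for every $r\ge 0$ (this is the content of \eqref{m1} with the index shift $n+1\mapsto r$).

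First I would extract the $(1,2)$ entry of \eqref{m2}. Since entry-extraction is linear, the $(1,2)$ entry of $\left(\tfrac{-A}{p+t}\right)^r$ is $\tfrac{(-1)^r}{(p+t)^r}\,P_r/D$, while the left-hand side contributes the $(1,2)$ entry of $A^n$, namely $P_n/D$. The common denominator $D$ therefore appears in every term on both sides and cancels outright, which disposes of the only non-polynomial ingredient. Next I would collect the signs: the factors $\left(\tfrac{-p(p+t)^2}{d}\right)^j$, $\left(\tfrac{-(p+t)}{p}\right)^k$, and the $(-1)^r$ coming from $\left(\tfrac{-A}{p+t}\right)^r$ combine into the single factor $(-1)^{j+k+r}$ written in the corollary. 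Finally I would move the scalar prefactor $\left(\tfrac{pd}{p^3+p^2t+sp+d}\right)^n$ from the right-hand side of \eqref{m2} to the left, turning it into $\left(\tfrac{p^3+p^2t+sp+d}{pd}\right)^n$ multiplying $P_n$; substituting $t,s,d$ in terms of $x,y,z$ then reproduces the right-hand side of the claimed identity verbatim, since $\tfrac{p^3+p^2t+sp+d}{pd}=\tfrac{p^3+p^2(x+y+z)+p(xy+xz+yz)+xyz}{p\,xyz}$.

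There is essentially no analytic obstacle here: the statement is a direct transcription of \eqref{m2} for one specific matrix, and the only real work is bookkeeping. The main point to handle with care is confirming that the $n+1\mapsto r$ shift in the $(1,2)$-entry formula from Theorem \ref{t3} is applied consistently, so that $(A^r)_{1,2}=P_r/D$ with the correct numerator $P_r$ for each $r$ in the range $0\le r\le 3n$; once that is pinned down, the cancellation of $D$, the consolidation of signs into $(-1)^{j+k+r}$, and the transfer of the prefactor are purely formal and yield the identity.
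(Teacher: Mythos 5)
Your proof is correct and takes essentially the same route as the paper: specialize Proposition~\ref{p2} to the companion matrix of Theorem~\ref{t3} and read off one entry of \eqref{m2}, using that the relevant entry of $A^r$ is your $P_r/D$ so that the denominator $(x-y)(x-z)(y-z)$ cancels throughout. The only discrepancy is that the paper's proof says to compare $(1,1)$ entries, whereas $(A^r)_{1,1}=P_{r+1}/D$ while $(A^r)_{1,2}=P_r/D$; since the stated identity carries $P_r$ and $P_n$ (not $P_{r+1}$ and $P_{n+1}$), your choice of the $(1,2)$ entry is the one consistent with the corollary, and the paper's ``$(1,1)$'' appears to be a slip.
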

\begin{proof} Let $A$ be the matrix from Theorem \ref{t3} and compare
$(1,1)$ entries on both sides of \eqref{m2}.
\end{proof}

\vspace{10pt}

\begin{corollary} Let $p$, $x$ and $z$ be indeterminates and let $n$
be a positive integer. Then
\begin{multline*}
 \sum_{r = 0}^{3\,n}
      \sum_{j = 0}^{n}\sum_{k = 0}^{n}
\binom{n}{  j}
\binom{n }{ k}
\binom{j }{ r-j-k}(-1)^{j+k+r}
\left(\frac{p(p+2x+z)^{2}}{x^2 z}\right)^{j}\\
\phantom{sdasdaasdasdada}\times
\left (\frac{p+2x+z}{p}\right)^{k}
 \frac{r\,x^{1 + r} - x^r\,z - r\,x^r\,z + z^{1 + r}}{(p+2x+z)^{r}}\\
%\end{multline*}
%\begin{multline*}
=\left(
n\,x^{1 + n} - x^n\,z - n\,x^n\,z + z^{1 + n}
\right)\\
\times
 \left( \frac{p^3  +
         p^2\,\left( 2x  + z \right)  +
         p\,\left( x^2 + 2x\,z  \right)+ x^{2}\,z}
       {p\,x^2\,z } \right)^n.
\end{multline*}
\end{corollary}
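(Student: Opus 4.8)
The plan is to obtain this identity from the Corollary immediately preceding it — the three–variable version in $p,x,y,z$ — by specializing $y\to x$, in exactly the same spirit as the Corollary following Theorem \ref{t3} was derived from Theorem \ref{t3} by letting $y\to x$. Both sides of the preceding Corollary are rational functions of $y$, so once we check that the apparent singularity at $y=x$ is removable, passing to the limit is legitimate and amounts to termwise substitution. The only place a vanishing denominator could arise is through the numerators $x\,y\,(x^r-y^r)-x\,z\,(x^r-z^r)+y\,z\,(y^r-z^r)$ (indexed by $r$ inside the sum) and $x\,y\,(x^n-y^n)-x\,z\,(x^n-z^n)+y\,z\,(y^n-z^n)$ on the right. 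By the proof of Theorem \ref{t3} each of these equals $(x-y)(x-z)(y-z)$ times the corresponding complete homogeneous polynomial, so each carries exactly one factor of $(x-y)$.

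The key step is therefore to cancel this common factor of $(x-y)$ before taking the limit. On the left it is independent of the summation indices $r,j,k$, so it pulls out of the triple sum, and on the right it is a single scalar factor; hence it cancels identically from both sides. After cancellation the remaining numerators are $(x-z)(y-z)$ times the homogeneous polynomials, which are regular at $y=x$. Letting $y\to x$ and invoking the computation already carried out in the Corollary after Theorem \ref{t3} (with its index $n+1$ replaced by $m$), one finds for each exponent $m$ the evaluation
\[
\lim_{y\to x}\frac{x\,y\,(x^m-y^m)-x\,z\,(x^m-z^m)+y\,z\,(y^m-z^m)}{x-y}
= m\,x^{1+m} - x^m z - m\,x^m z + z^{1+m}.
\]
Taking $m=r$ reproduces the numerator appearing inside the sum and $m=n$ reproduces the leading factor on the right-hand side of the asserted identity.

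It then remains to substitute $y=x$ in every surviving rational factor, which is routine: $x+y+z\mapsto 2x+z$, $xy+xz+yz\mapsto x^2+2xz$, and $xyz\mapsto x^2z$, so that $\left(\frac{p(p+x+y+z)^2}{xyz}\right)^j\mapsto\left(\frac{p(p+2x+z)^2}{x^2z}\right)^j$, $\left(\frac{p+x+y+z}{p}\right)^k\mapsto\left(\frac{p+2x+z}{p}\right)^k$, the denominators $(p+x+y+z)^r\mapsto(p+2x+z)^r$, and the final bracketed factor becomes $\left(\frac{p^3+p^2(2x+z)+p(x^2+2xz)+x^2z}{p\,x^2z}\right)^n$; the binomial coefficients and the sign $(-1)^{j+k+r}$ are untouched. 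I expect the main obstacle to be purely bookkeeping rather than conceptual: one must argue the cancellation of $(x-y)$ cleanly (so that no $0/0$ ambiguity is left) and confirm that the non-degeneracy hypotheses of Proposition \ref{p2} persist for the specialized matrix. A clean alternative that sidesteps the limit altogether is to apply Proposition \ref{p2} directly to the matrix $A$ of Theorem \ref{t3} with $y$ already set equal to $x$, and to compare the same entries of both sides of \eqref{m2}, now reading off the entries of the powers of this matrix with a repeated eigenvalue from the Corollary following Theorem \ref{t3}; this yields the stated identity without any removable-singularity discussion.
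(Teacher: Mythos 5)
Your proposal is correct and follows exactly the paper's route: the paper's proof is the one-line instruction ``Divide both sides in the corollary above by $x-y$ and let $y\to x$,'' and your limit computation $\lim_{y\to x}\bigl(x\,y\,(x^m-y^m)-x\,z\,(x^m-z^m)+y\,z\,(y^m-z^m)\bigr)/(x-y)= m\,x^{1+m}-x^mz-m\,x^mz+z^{1+m}$ is precisely the verification the paper leaves implicit. The additional care you take with the removability of the singularity is sound and only makes the argument more explicit.
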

\begin{proof}
Divide both sides in the corollary
above by $x-y$ and let $y\to x$.
\end{proof}

\begin{corollary} Let $p$ and $x$  be indeterminates and let $n$
be a positive integer. Then
{\allowdisplaybreaks
\begin{multline*}
 \sum_{r = 0}^{3\,n}
      \sum_{j = 0}^{n}\sum_{k = 0}^{n}
\binom{n}{j}
\binom{n}{k}
\binom{j}{r-j-k}(-1)^{j+k+r}
\left(\frac{p(p+3x)^{2}}{x^3}\right)^{j}
\\
\times
\left (\frac{p+3x}{p}\right)^{k}
 \frac{r\,\left( 1 + r \right) \,x^{-1 + r}}{2(p+3x)^{r}}
=\frac{n\,\left( 1 + n \right) \,x^{-1 + n}}{2}
 \left( \frac{(p+x)^{3}}
       {p\,x^3 } \right)^n.
\end{multline*}}
\end{corollary}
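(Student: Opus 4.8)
The plan is to derive this identity from the preceding corollary by a second-order confluence, letting $z \to x$. The preceding corollary is an identity in the rational function field $K(p,x,z)$, so I may substitute $z = x + \varepsilon$ and extract the coefficient of any power of $\varepsilon$, or equivalently divide by a power of $(z-x)$ and set $z = x$. First I would isolate where the $z$-dependence lives. On the left, the factors $\frac{p(p+2x+z)^2}{x^2 z}$, $\frac{p+2x+z}{p}$, and $(p+2x+z)^{-r}$ are regular at $z = x$ and specialize there to $\frac{p(p+3x)^2}{x^3}$, $\frac{p+3x}{p}$, and $(p+3x)^{-r}$; on the right, the bracketed factor is regular at $z=x$ and specializes to $\frac{(p+x)^3}{px^3}$, since its numerator collapses to $(p+x)^3$. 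These are precisely the factors occurring in the target statement, so all the nontrivial behaviour is carried by the two polynomials $f_r(z) := r x^{1+r} - x^r z - r x^r z + z^{1+r} = z^{r+1}-(r+1)x^r z + r x^{r+1}$ on the left and $g(z) := z^{n+1}-(n+1)x^n z + n x^{n+1}$ on the right.

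The crucial point is that $f_r$ and $g$ each vanish to order two at $z = x$. Indeed $f_r(x)=0$ and $f_r'(x)=0$, while $f_r''(x)=r(r+1)x^{r-1}$ (as a check, $f_1(z)=(z-x)^2$ and $f_0\equiv 0$), and the same with $n$ in place of $r$ for $g$. I would therefore divide both sides of the preceding corollary by $(z-x)^2$ and let $z\to x$. Since the triple sum is finite, the limit passes through it term by term: each quotient $f_r(z)/(z-x)^2$ tends to $\tfrac12 f_r''(x)=\tfrac{r(r+1)}{2}x^{r-1}=\tfrac{r(1+r)x^{-1+r}}{2}$, the remaining left-hand factors evaluate at $z=x$ as above, and on the right $g(z)/(z-x)^2\to \tfrac{n(n+1)}{2}x^{n-1}$ while the bracketed factor tends to $\big(\tfrac{(p+x)^3}{px^3}\big)^n$. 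Equating the two limits gives exactly the claimed identity.

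I expect the only real obstacle to be keeping track of the order of vanishing. In the passage from the three-variable corollary to the two-variable one the confluence $y\to x$ produced a simple zero, so one divided by $(x-y)$; here the confluence $z\to x$ is genuinely second order, which is why one must divide by $(z-x)^2$ and why the factor $\tfrac{r(1+r)x^{r-1}}{2}$ (a normalized second derivative) appears in place of a first derivative. Once the double-zero computation $f_r''(x)=r(r+1)x^{r-1}$ is confirmed, the remainder is a routine evaluation of regular rational factors at $z=x$.
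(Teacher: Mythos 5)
Your proposal is correct and is exactly the paper's argument: the paper's proof is the one-line instruction ``Divide both sides in the corollary above by $(x-z)^2$ and let $z\to x$,'' and your write-up simply supplies the details (the double zero of $z^{r+1}-(r+1)x^rz+rx^{r+1}$ at $z=x$ and the evaluation of the regular factors) that the paper leaves implicit.
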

\begin{proof}
Divide both sides in the corollary
above by $(x-z)^2$ and let $z\to x$.
\end{proof}

\begin{corollary}
Let $p$   be an  indeterminate and let $n$
be a positive integer. Then
\begin{multline*}
 \sum_{r = 0}^{3\,n}
      \sum_{j = 0}^{n}\sum_{k = 0}^{n}
\binom{n}{j}
\binom{n}{k}
\binom{j}{r-j-k}(-1)^{j+k+r}p^{j-k}(p+3)^{2j+k-r}
 \frac{r\,\left( 1 + r \right) }{2}\\
=\frac{n\,\left( 1 + n \right) }{2}
  \frac{(p+1)^{3n}}
       {p^{\,n}}.
\end{multline*}
\end{corollary}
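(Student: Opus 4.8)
The plan is to obtain this identity as the one-variable specialization of the immediately preceding corollary by setting $x=1$ there. That corollary, stated in the two indeterminates $p$ and $x$, already contains every ingredient of the present statement; since only the single indeterminate $p$ survives here, the natural move is simply to substitute $x=1$ and then collect the resulting powers of $p$ and of $p+3x$. This continues the chain of confluences used in the previous proofs ($y\to x$, then $z\to x$), with the final specialization being an honest substitution rather than a limit, because the summand of the preceding corollary is already polynomial in $x^{\pm 1}$.

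Concretely, I would proceed factor by factor on the left-hand side. Putting $x=1$ turns $\left(p(p+3x)^2/x^3\right)^j$ into $p^j(p+3)^{2j}$, turns $\left((p+3x)/p\right)^k$ into $(p+3)^k p^{-k}$, and turns $r(1+r)x^{-1+r}/\bigl(2(p+3x)^r\bigr)$ into $r(1+r)(p+3)^{-r}/2$, using $x^{-1+r}=1$. Multiplying the three $p$-powers gives $p^{j-k}$, and multiplying the three $(p+3)$-powers gives $(p+3)^{2j+k-r}$, which reproduces exactly the summand asserted. On the right-hand side, $x=1$ collapses $\frac{n(1+n)x^{-1+n}}{2}\bigl((p+x)^3/(px^3)\bigr)^n$ to $\frac{n(1+n)}{2}(p+1)^{3n}/p^n$, as required.

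I expect no genuine obstacle: the combinatorial content is entirely inherited from Proposition~\ref{p2} through the two confluences, and the final step is pure bookkeeping of exponents. The only point needing a moment's care is checking that the three separate powers of $p$ and of $p+3$ recombine with the correct signs and that the alternating prefactor $(-1)^{j+k+r}$ survives unchanged, which it does because the substitution $x=1$ touches neither the binomial coefficients nor the sign. As an independent sanity check, one could instead read the identity as a comparison of a fixed entry of $A^n$ in Proposition~\ref{p2} for the companion matrix of $(X-1)^3$, i.e.\ $t=s=3$ and $d=1$; there $A$ is a single Jordan-type block with eigenvalue $1$, whose $n$-th power has entries of the form $\binom{n}{2}$-type, which accounts for the factor $n(1+n)/2$ appearing on the right.
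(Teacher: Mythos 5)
Your proof is correct and follows essentially the same route as the paper: both derive the identity from the immediately preceding corollary by an elementary substitution that eliminates $x$. The paper replaces $p$ by $p\,x$ and cancels the resulting common factor of $1/x$ from both sides, whereas you set $x=1$ directly; the two specializations are equally valid and produce the identical bookkeeping of exponents.
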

\begin{proof}
Replace $p$ by $p\, x$ in the corollary above and simplify.
\end{proof}
Various combinatorial identities can be derived
from Theorem \ref{t3} by considering matrices $A$
 such that particular entries in $A^{n}$ have a simple
closed form.  We give four examples.

\vspace{10pt}

\begin{corollary}
 Let $n$ be a positive integer.\\
(i) If $p\not =0, -1$, then
\[
\sum_{r = 0}^{3\,n}
      \sum_{j = 0}^{n}\sum_{k = 0}^{n}
\binom{n}{j}
\binom{n}{k}
\binom{j}{r-j-k}(-1)^{j+k+r}p^{j-k}(p+3)^{2j+k-r}r=
n\frac{
{\left( 1 + p \right) }^{3n}
}{p^n}.
\]
(ii) Let $F_{n}$ denote the $n$-th Fibonacci number. If $p\not =0, -1, \phi$
or $1/\phi$ (where $\phi$ is the golden ratio, then
\begin{multline*}
\sum_{r = 0}^{3\,n}
      \sum_{j = 0}^{n}\sum_{k = 0}^{n}
\binom{n}{j}
\binom{n}{k}
\binom{j}{r-j-k}(-1)^{k+r}p^{j-k}(p+2)^{2j+k-r}F_{r}\\
=F_{n}\frac{(1+p)^{n}{\left( -1+p + p^{2} \right) }^{n}
}
{(-p)^n
}.
\end{multline*}
(iii) If $p\not =0, -1$ or $-2$, then
\begin{multline*}
\sum_{r = 0}^{3\,n}
      \sum_{j = 0}^{n}\sum_{k = 0}^{n}
\binom{n}{j}
\binom{n}{k}
\binom{j}{r-j-k}(-1)^{j+k+r}p^{j-k}(p+4)^{2j+k-r}2^{-j}(2^{r}-1)\\
=
(2^{n}-1)
\left(
\frac{
(1+p)^{2}(p+2)
}
{2p}
\right)^{n}.
\end{multline*}
(iv) If $p\not =0, -1, -g$ or $-h$ and $gh\not = 0$, then
\begin{multline*}
\sum_{r = 0}^{3\,n}
      \sum_{j = 0}^{n}\sum_{k = 0}^{n}
\binom{n}{j}
\binom{n}{k}
\binom{j}{r-j-k}(-1)^{j+k+r}p^{j-k}\\
\times(p+1+g+h)^{2j+k-r} \frac{g^{r}+h^{r}}{(g\,h)^{j}}
=
(g^{n}+h^n)
\left(
\frac{
 (1+p) (g+p) (h+p)
}
{g h p}
\right)^{n}.
\end{multline*}
\end{corollary}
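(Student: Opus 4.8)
The plan is to deduce all four identities from Proposition~\ref{p2} by choosing, in each part, a specific $3\times 3$ matrix $A$ whose characteristic data $(t,s,d)$ reproduce the displayed coefficients and one of whose powers $A^{r}$ has a transparent entry. Concretely, I would take $A$ of block form $A=\mathrm{diag}(1,B)$ with $B$ a $2\times2$ block; then $A^{r}=\mathrm{diag}(1,B^{r})$, the factorization identity behind Proposition~\ref{p2} holds entrywise, and I can extract a scalar identity by comparing a chosen entry (or sum of entries) of $A^{n}$ with the corresponding sum over $r$ coming from \eqref{m2}.

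First I would record how the data of $A$ feed into \eqref{m2}. Expanding the three powers there produces the sign $(-1)^{j+k+r}$, the factor $p^{j-k}$, the factor $(p+t)^{2j+k-r}$, the factor $d^{-j}$, and the matrix $A^{r}$, all times the prefactor $\bigl(pd/(p^{3}+p^{2}t+sp+d)\bigr)^{n}$. Since the characteristic polynomial is $x^{3}-tx^{2}+sx-d$, substituting $x=-p$ gives $p^{3}+p^{2}t+sp+d=(p+\lambda_{1})(p+\lambda_{2})(p+\lambda_{3})$ for the eigenvalues $\lambda_{i}$ of $A$, so inverting the prefactor yields exactly the product form on the right-hand sides. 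For parts (i)--(iii) I would choose $B$ so that its $(1,2)$ power-entry is the target sequence: $B=\left(\begin{smallmatrix}1&1\\0&1\end{smallmatrix}\right)$ gives $(B^{r})_{12}=r$ with eigenvalues $1,1$ (so $t=3$, $d=1$); $B=\left(\begin{smallmatrix}1&1\\1&0\end{smallmatrix}\right)$ gives $(B^{r})_{12}=F_{r}$ with eigenvalues $\phi,-1/\phi$ (so $t=2$, $d=-1$); and $B=\left(\begin{smallmatrix}2&1\\0&1\end{smallmatrix}\right)$ gives $(B^{r})_{12}=2^{r}-1$ with eigenvalues $2,1$ (so $t=4$, $d=2$). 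In each case comparing the $(2,3)$ entries of the two sides of \eqref{m2} and dividing by the prefactor yields the stated identity; the sign shift in (ii), where $(-1)^{k+r}$ replaces $(-1)^{j+k+r}$, is accounted for exactly by $d^{-j}=(-1)^{j}$.

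The genuinely different step is part (iv), which I expect to be the main obstacle, because $g^{r}+h^{r}$ cannot be a single off-diagonal entry of a $2\times2$ block: for $B$ with eigenvalues $g,h$ one has $(B^{r})_{12}=b\,\frac{g^{r}-h^{r}}{g-h}$, a multiple of the \emph{difference} power sum, never $g^{r}+h^{r}$. The resolution is to note that $g^{r}+h^{r}=\mathrm{tr}(B^{r})$, so I would take the diagonal matrix $A=\mathrm{diag}(1,g,h)$ (eigenvalues $1,g,h$, hence $t=1+g+h$, $s=g+h+gh$, $d=gh$) and compare, not a single entry, but the \emph{sum} of the $(2,2)$ and $(3,3)$ entries of \eqref{m2}. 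Then $(A^{n})_{22}+(A^{n})_{33}=g^{n}+h^{n}$ on the left, while on the right the bracketed matrix contributes $(A^{r})_{22}+(A^{r})_{33}=g^{r}+h^{r}$; the factor $d^{-j}=(gh)^{-j}$ supplies the $\tfrac{1}{(gh)^{j}}$, the power $(p+t)^{2j+k-r}$ becomes $(p+1+g+h)^{2j+k-r}$, and dividing by the prefactor $\bigl(pgh/((p+1)(p+g)(p+h))\bigr)^{n}$ gives the claimed factor $\bigl((1+p)(g+p)(h+p)/(ghp)\bigr)^{n}$. It then remains only to check the hypotheses of Proposition~\ref{p2} (here $gh\neq0$ and $p\neq0,-1,-g,-h$ guarantee $d\neq0$ and $p^{3}+p^{2}t+sp+d\neq0$) and to confirm the exponent bookkeeping, which is routine once the correspondence $t=1+g+h$, $d=gh$ is fixed.
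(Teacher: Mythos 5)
Your proposal is correct and follows the paper's own route: specialize Proposition~\ref{p2} at an explicit $3\times 3$ matrix with the appropriate trace and determinant and read off a fixed entry of \eqref{m2}, and your matrices for (i)--(iii) coincide with the paper's up to a permutation of the basis vectors. For (iv) the paper instead takes the block $\left(\begin{smallmatrix}(g+h)/2 & (g-h)^2/4\\ 1 & (g+h)/2\end{smallmatrix}\right)$, whose $r$-th power has $(1,1)$ entry $\tfrac12(g^{r}+h^{r})$ (its $(1,2)$ entry, which the paper nominally cites, is actually a multiple of $g^{r}-h^{r}$), so your choice of $\mathrm{diag}(1,g,h)$ together with the sum of the two diagonal entries is an equally valid, and arguably cleaner, application of the same linear-functional idea.
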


\begin{proof} The results follow from considering the $(1,2)$ entries on both
sides in Theorem 3 for the matrices
\[
\left (
\begin{matrix} 1 & 1 & 0 \cr 0 & 1 & 0 \cr 0 & 0 & 1 \cr
\end{matrix}
\right ),
\left ( \begin{matrix} 1 & 1 & 0 \cr 1 & 0 & 0 \cr 0 & 0 & 1 \cr  \end{matrix}
\right ),
\left ( \begin{matrix} 3 & 1 & 0 \cr -2 & 0 & 0 \cr 0 & 0 & 1 \cr \end{matrix}
\right ),
\left ( \begin{matrix}
\displaystyle{\frac{g + h}{2} }& \displaystyle{\frac{{\left( g - h \right) }^2}{4}} & 0
 \cr 1 & \displaystyle{\frac{g + h}
   {2}} & 0 \cr 0 & 0 & 1 \cr \end{matrix}
\right),
\]
respectively.
\end{proof}

\section{A Result of Bernstein}
In \cite{B74} Bernstein showed that the only zeros of the integer function
\[
f(n):=\sum_{j\geq0} (-1)^{j}\binom{n-2j}{j}
\]
are at $n=3$ and $n=12$. We use Corollary \ref{c1} to relate
the zeros of this function to solutions of a certain cubic Thue equation
and hence to derive Bernstein's result.

Let
\[
A=
\left ( \begin{matrix} 1 & 1 & 0 \cr 0 & 0 & 1 \cr -1 & 0 & 0 \cr  \end{matrix}
\right ).
\]
With the notation of Corollary \ref{c1}, $t=1$, $s=0$, $d=-1$, so that
\[
a_n =
\sum_{3j \leq n} (-1)^j  \binom{n-2j }{j}=f(n),
\]
and, for $n \geq 4$,
\begin{align*}
A^{n}&= f(n-2)A^{2}+(f(n)-f(n-2))A+(f(n)-f(n-1))\,I\\
&=
\left (
\begin{matrix}
 f(n) & f(n-1) & f(n-2) \cr
-f(n-2) & -f(n-3) & -f(n-4) \cr
 -f(n-1) & -f(n-2) & -f(n-3) \cr
\end{matrix}
\right ).
\end{align*}
The last equality follows from the fact that $f(k+1)=f(k)-f(k-2)$,
for $k \geq 2$.

Now suppose $f(n-2)=0$. Since the recurrence relation above gives that
$f(n-4) =-f(n-1)$ and $f(n)=f(n-1)-f(n-3)$, it follows that
\begin{align*}
(-1)^{n}=
\det(A^{n})&=
\left |
 \begin{matrix}
 f(n-1)-f(n-3) & f(n-1) & 0 \cr
0& -f(n-3) & f(n-1) \cr
 -f(n-1) & 0& -f(n-3) \cr
\end{matrix}
\right |\\
&\phantom{as}\\
&=
-f(n-1)^{3}-f(n-3)^{3}+f(n-1)f(n-3)^{2}.
\end{align*}
Thus $(x,y) = \pm (f(n-1),f(n-3))$ is a solution of the Thue equation
\[
x^{3}+y^{3}-x\,y^{2}=1.
\]

One could solve this equation in the usual manner of finding bounds
on powers of fundamental units in the cubic number field defined by
the equation $x^{3}-x+1=0$. Alternatively, the Thue equation solver
in PARI/GP \cite{PG04} gives unconditionally (in less than a second) that the
only solutions to this equation are
\[
(x,y) \in \left \{ (4, -3), (-1, 1), (1, 0), (0, 1), (1, 1)\right \},
\]
leading to Bernstein's result once again.

 \allowdisplaybreaks{

}
\end{document}